\documentclass[12pt, reqno]{amsart}
\usepackage{amsmath, amsthm, amscd, amsfonts, amssymb, graphicx, color}
\usepackage[bookmarksnumbered, colorlinks, plainpages]{hyperref}

%%%%%%%%%%%%%%%%%%%%%%%%%%%%%%%%%%%%%%%%%%%%%%%%%%%%%%%%%
% If you want to insert other packages. Insert them here
%%%%%%%%%%%%%%%%%%%%%%%%%%%%%%%%%%%%%%%%%%%%%%%%%%%%%%%%%

\makeatletter \oddsidemargin.9375in \evensidemargin \oddsidemargin
\marginparwidth1.9375in \makeatother

%%%%%%%%%%%%%%%%%%%%%%%%%%%%%%%%
\newtheorem{theorem}{Theorem}[section]
\newtheorem{lemma}[theorem]{Lemma}

\theoremstyle{definition}

\newtheorem{question}[theorem]{Question}
\newtheorem{example}[theorem]{Example}

\theoremstyle{remark}
\newtheorem{remark}[theorem]{Remark}
\numberwithin{equation}{section}
\begin{document}
\setcounter{page}{1}

%%%%%%%%%%%%%%%%%%%%%%%%%%%%%%%%%%%%%%%%%%%%%%%
%% Please do not remove the following statement.
%%%%%%%%%%%%%%%%%%%%%%%%%%%%%%%%%%%%%%%%%%%%%%%
%\noindent {\footnotesize The Extended Abstracts of \\
%The 6$^{\rm th}$ Seminar on Functional Analysis and its Applications\\
%4-5th March 2020, University of Isfahan, Iran}\\[1.00in]
%%%%%%%%%%%%%%%%%%%%%%%%%%%%%%%%%%%%%%%%%%%%%%%
%% ==>start from here
%% Start from here
%%%%%%%%%%%%%%%%%%%%%%%%%%%%%%%%%%%%%%%%%%%%%%%

%%%%%%%%%%%%%%%%%%%%%%%%%%%%%%%%%%%%%%%%%%%%%%%%%%%%%%%%%%%%%%%%%%%%%
% Insert title of your article. Note: \title[short title]{full title}
%%%%%%%%%%%%%%%%%%%%%%%%%%%%%%%%%%%%%%%%%%%%%%%%%%%%%%%%%%%%%%%%%%%%%
\title[CONVEX-cyclic W. T. ON L. C. GROUPS]
{CONVEX-cyclic WEIGHTED TRANSLATIONS ON LOCALLY COMPACT GROUPS}
%%%%%%%%%%%%%%%%%%%%%%%%%%%%%%%%%%%%%%
% Author's name must be inserted here
%%%%%%%%%%%%%%%%%%%%%%%%%%%%%%%%%%%%%%

\author[M. R. Azimi, I. Akbarbaglu and M. Asadipour]
{M. R. Azimi,  I. Akbarbaglu and M. Asadipour}

%%%%%%%%%%%%%%%%%%%%%%%%%%%%%%%%%%%%%%%%%%%%%%%%%%%%%%%%%%%%%%%%%%%%%%%
\address {M. R. Azimi}
\address{Department of Mathematics, Faculty of Sciences,
University of Maragheh, 55181-83111, Golshahr, Maragheh, Iran}
\email{mhr.azimi@maragheh.ac.ir}
%%%%%%%%%%%%%%%%%%%%%%%%%%%%%%%%%%%%%%%%%%%%%%%%%%%%%%%%%%%%%%%%%%%%%%%%
\address{I. Akbarbaglu}
\address{Department of Mathematics, Farhangian University,
  Tehran, Iran}
\email{ibrahim.akbarbaglu@gmail.com;i.akbarbaglu@cfu.ac.ir}
%%%%%%%%%%%%%%%%%%%%%%%%%%%%%%%%%%%%%%%%%%%%%%%%%%%%%%%%%%%%%%%%%%%%%%%%
\address {M. Asadipour}
\address{Department of Mathematics, College of Sciences,
Yasouj University, Yasouj, 75918-74934, Iran}
\email{asadipour@yu.ac.ir}
%%%%%%%%%%%%%%%%%%%%%%%%%%%%%%%%%%%%%%%%%%%%%%%%%%%%%%%%%%%%%%%%%%%%%%%%
% Subject class; see http://www.ams.org/mathscinet/msc/msc2010.html
%%%%%%%%%%%%%%%%%%%%%%%%%%%%%%%%%%%%%%%%%%%%%%%%%%%%%%%%%%%%%%%%%%%%%%%%
\subjclass[2010]{Primary 47A16; Secondary 52A07.}
%%%%%%%%%%%%%%%%%%%%%%%%%%%%%%%%%%%%%%%%%%%%%%%%%%%%%%%%%%%%%%%%%%%%%%%%%%%%%%%%%%%%%%%%%%
% keywords; Note that the number of keywords must be at least 3 items and at most 5 items.
%%%%%%%%%%%%%%%%%%%%%%%%%%%%%%%%%%%%%%%%%%%%%%%%%%%%%%%%%%%%%%%%%%%%%%%%%%%%%%%%%%%%%%%%%%
\keywords{convex-cyclic, hypercyclic, orbit, convex-transitive,
convolution, locally compact group.}
%%%%%%%%%%%%%%%%%%%%%%%%%%%%%%%%%%%%%%%%%%%%%%%%%%%%%%%%%%%%%%%%%%%%%%%%%%%%%%%%%%%%%%%%%%
% Abstract of your extended abstract
% Note: the abstract should be 200 words or less with no reference number therein.
%The speaker is responsible for the proper formatting his/her talk by using the style
%file of the booklet of abstracts.
%%%%%%%%%%%%%%%%%%%%%%%%%%%%%%%%%%%%%%%%%%%
\begin{abstract}
A bounded linear operator $T$ on a Banach space $X$ is called a
convex-cyclic operator if there exists a vector $x \in X$ such that
the convex hull of $Orb(T, x)$ is dense in $X$.
In this paper, for given an aperiodic element $g$ in a locally compact group $G$,
we give some sufficient conditions for
a weighted translation operator $T_{g,w}: f \mapsto w\cdot f*\delta_g$ on
$\mathfrak{L}^{p}(G)$ to be convex-cyclic. A necessary
condition  is also studied. At the end, to explain the obtained
results, some examples are given.
\end{abstract}

%%%%%%%%%%%%%%%%%%%%%%%
\maketitle
%%%%%%%%%%%%%%%%%%%%%%%

%%%%%%%%%%%%%%%%%%%%%%%%%
% The body of your extended abstract
%%%%%%%%%%%%%%%%%%%%%%%%%

%%%%%%%%%%%%%%%%%%%%%%%%%%%%%%%%%
%% Note that The number of
%% pages of the extended abstract should have 3-4 pages. Papers prepared
%% in less than 3 pages, more than 4 pages or out of the style of the meeting will be
%% returned
%%%%%%%%%%%%%%%%%%%%%%%%%%%%%%%%%
\section{Introduction}

%\subsection{Related to hypercyclicity}
%\label{intro}
A bounded linear operator $T$ on a separable infinite-dimensional
Banach space $X$ over the field $\mathbb{C}$ is called
\emph{hypercyclic} and supercyclic if there exists a vector $x\in X$
such that $\overline{Orb(T , x)}=X$ and $\overline{\mathbb{C}. Orb(T
, x)}=X$, respectively, where $Orb(T , x)=\{T^{n}x; \;\; n\in
\mathbb{N}  \}$. If $span\big(Orb(T , x)\big)$ is dense in $X$,
then $T$ is called a cyclic operator. Recall that, the
notion of hypercyclicity was already studied by Birkhoff \cite{Bir}
when he introduced the notion of the topological transitivity. To be
precise, an operator $ T $ is \emph{topologically transitive}, if
for every pair of nonempty open subsets $ U, V $ of $ X $, there
exists a non-negative integer $ n $ such that $ T^{n}(U)\cap V
\not=\emptyset $. It is not difficult to observe that
$$ Transitivity \Longleftrightarrow Hypercyclicity \Longrightarrow Supercyclicity \Longrightarrow Cyclicity.$$

Similar to the definition of the transitivity, if there exists a
non-negative integer $ N $ such that $ T^{n}(U)\cap V \not=\emptyset$
for all integers $ n\geq N $, then $T$ is called
\emph{topologically mixing} and it is clear that
$$Mixing \Longrightarrow Transitivity.$$

%\subsection{Convex-cyclicity}

%\subsection{An organization of the paper}

Like supercyclicity, another well known concept can be appeared
between cyclicity and hypercyclicity that is when the convex hull
generated by an orbit $Orb(T , x)$ is dense in $X$. In this case,
$x$ is called a convex-cyclic vector for $T$ and also, $T$ is called
a \emph{convex-cyclic} operator. Note that, if $\mathfrak{Cp}$
denotes the set of all convex polynomials, then $co(Orb(T, x)) =
\{P(T)x : P \in \mathfrak{Cp}\}.$ Similar to the definition of the
transitivity, a bounded linear operator $T$ on $X$ is called a
\emph{convex-transitive} operator if for every nonempty open subsets
$V$ and $U$ of $X$, the intersection $P(T)(V)\cap U$ is nonempty for
some convex polynomial $P \in \mathfrak{Cp}$. The relations between
convex-cyclicity and convex-transitivity is exhibited in the
following diagram.
$$Convex-transitivity \Rightarrow Convex-cyclicity.$$
The converse of the above diagram is correct
when the point spectrum of the adjoint of the associated operator is empty \cite[Theorem 3.9]{Re}.
Initially, the notion of the convex-cyclicity has been studied by H.
Rezaei in \cite{Re}. After that, other authors studied this notion in
\cite{Feldman}, \cite{Saavedra} and \cite{Sazegar}. For
example, the Hahn-Banach characterization for convex-cyclicity
theorem is an important theorem which we will use it in the
next section. Therefore we state it below and of course, its proof
can be found in \cite{Feldman}:\\
\textbf{\textit{Hahn-Banach Characterization for Convex-Cyclicity;}}
Let $X$ be a Banach space over the complex numbers, $T$ be a bounded linear operator on $X$ and $x \in X$.
Then the following are equivalent:\\
\textit{i}) $x$ is a convex-cyclic vector for $T$.\\
\textit{ii}) $\sup \limits_{n\geq 0} Re(\Lambda (T^{n}x)) = +\infty$
for every linear functional $ \Lambda \in X^*\setminus \lbrace 0\rbrace$.

Let us mention that for an arbitrary bounded sequence $w=\{w_{j}> 0\}_{j\in \mathbb{N}}$
and the canonical basis $\{e_{j}\}_{j\in \mathbb{N}}$ of $\ell_{p}(\mathbb{N})$ when $p\in [1,+\infty)$,
the operator $B_{w}$ on $\ell_{p}(\mathbb{N})$,
which is defined by
$$ B_{w}(e_{j})=
\begin{cases}
w_{j}e_{j-1}, &  j \geq 2\hspace{3mm} \\
0 &  j=1\hspace{3mm}
\end{cases}, $$
is called a unilateral weighted backward shift.
First of all, Rolewicz \cite{Rolewicz} showed that $\lambda B$ on $\ell_{2}(\mathbb{N})$ is
hypercyclic for any complex number $\vert\lambda\vert>1$
while the backward shift operator $B$ is not hypercyclic.
Then for $p\in [1,\;+\infty)$ and an arbitrary bounded sequence $w=\{w_{j}> 0\}_{j\in \mathbb{Z}}$
and the standard base $\{e_{j}\}_{j\in \mathbb{Z}}$ for $\ell_{p}(\mathbb{Z})$
the hypercyclic bilateral weighted shift operators on $\ell_{p}(\mathbb{Z})$,
which is defined by $ B_{w}(e_{j})=w_{j}e_{j-1}, $ have been characterized
by Salas \cite{Salas} in terms of their weights.\\
In \cite{Re}, Rezaei characterized a convex-cyclic weighted backward shift on $ \ell_{p}(\mathbb{N}) $.
Indeed, he proved that a weighted backward shift on $ \ell_{p}(\mathbb{N}) $
is hypercyclic if and only if it is convex-cyclic.
Then he claimed that a certain condition is sufficient for a bilateral weighted shift on
$ \ell_{p}(\mathbb{Z}) $ to be convex-cyclic \cite[Theorem 4.2]{Re},
but the proof of the associate theorem is correct only in a special case.
So in this paper, we will give a full generalization of this result to
weighted translation operators on the Lebesgue spaces.\\
In \cite{Chen1} and \cite{Chen2} the hypercyclic weighted translation operators
on the Lebesgue space $\mathfrak{L}^{p}(G)$
in terms of the weight were characterized in discrete and non-discrete groups.
Therefore, it is natural to raise the following question:
\begin{question}
Is there a description of the convex-transitive weighted translation operators on
the Lebesgue space $\mathfrak{L}^{p}(G)$?
\end{question}
For deep understanding of the hypercyclicity, we would like to mention two excellent books \cite{B} and \cite{G}.
Also, dynamics of the weighted translations in different settings have
been studied in \cite{ak, kum, Azadikhouy, az, az2}.
% and the proof is wrong in other cases.
%It is worthwhile to mention that, two corollaries of \cite[Theorem 4.2]{Re}
%are correct because they are based on that particular case.\\
%After that, Bermudez et al. \cite{Feldman} gave a Hahn-Banach characterization for convex-cyclicity,
%also, they investigated on the relation between convex-cyclicity and Cesaro hypercyclicity,
%weakly hypercyclicity, hypercyclicity with support $N$ and $ \varepsilon $-hypercyclicity.
%Convex-cyclicity has been studied by other authors in \cite{Saavedra} and \cite{Sazegar}.\\
%%%%%%%%%%%%%%%%%%%%%%%%%%%%%%%%%%%%%%%%%%%%%%%%%%%%%%%%%%%%%%%%%%%%
%%%%%%%%%%%%%%%%%%%%%%%%%%%%%%%%%%%%%%%%%%%%%%%%%%%%%%%%%%%%%%%%%%%%
\section{Main results}

%\subsection{An organization of the paper}
The operators that we want to study in this paper, are
in the complex Lebesgue space $\mathfrak{L}^{p}(G)$, when
$G$ is a locally compact group with a right Haar measure
$\vartheta$. Note that, the Lebesgue space $\mathfrak{L}^{p}(G)$
with recent properties is separable whenever $G$
is second countable.
%therefore in this paper $\mathfrak{G}$ always denotes a second countable group with above properties.\\
We remind that a torsion element in a locally compact group $G$ is an element of finite order,
and an element $g\in G$ is called compact \cite{Hewitt} (or periodic \cite{Grosser})
if the closed subgroup $G(g)$ generated by $g$ is compact. Also, an element in
$G$ is called aperiodic if it is not periodic.
Observe that in discrete groups, a periodic element is a torsion element and conversely.\\
Given an element $g \in G$, the unit point mass function
at $g$ is denoted by $\delta_{g}$, and a weight on $G$
is considered as a real bounded function $w : G \rightarrow (0,+\infty)$.
Then a weighted translation operator $T_{g,w}$ is defined on
$\mathfrak{L}^{p}(G)$ by
$$T_{g,w}(f)(x):=w(x)(f*\delta_{g})(x)=
w(x) \int_{G}f(xh^{-1})d\delta_{g}(h)=
w(x) f(xg^{-1}).$$
It is not difficult to observe that,
\begin{equation} \label{eq}
T^{n}_{g,w}(f)=\bigg( \prod\limits_{i=0}^{n-1}w*\delta_{g^{i}}\bigg)(f*\delta_{g^{n}}).
\end{equation}
Chen and Chu \cite{Chen2} were succeeded in showing how topologically
mixing of $T_{g,w}$ depends on the behavior of successive
translations of $w$ by $g$. More precisely, if
$T_{g,w}:\mathfrak{L}^{p}(G)\rightarrow \mathfrak{L}^{p}(G)$ is a
weighted translation,
then the following statements are equivalent (\cite[Theorem 2.2]{Chen2}).\\
\textit{i)} $T_{g,w}$ is a topologically mixing weighted translation.\\
\textit{ii)} If $K$ is an arbitrary compact subset of $G$ with
$\vartheta(K)>0$, then there exists a sequence of Borel sets
$\{B_{n}\}\subset K$ such that $\displaystyle\lim_{n\rightarrow
\infty}\vartheta(B_{n})=\vartheta(K)$ and both sequences $\{w_{n}\}$
and $\{\tilde{ w}_{n}\}$ which
$$ w_{n}:=\prod_{i=1}^{n}w*\delta_{g^{-1}}^{i}\;\;\;\;and\;\;\;
\tilde{w}_{n}:=\dfrac{1}{ \prod_{i=0}^{n-1} w*\delta_{g}^{i}}  $$
satisfy
$$\lim_{n\rightarrow \infty} \Vert w_{n}\vert_{B_{n}} \Vert_{\infty}=
\lim_{n\rightarrow \infty} \Vert \tilde{w}_{n}\vert_{B_{n}} \Vert_{\infty}=0.$$
We are now ready to show how the convex-cyclicity of the weighted
translation $T_{g,w}$ depends on the behavior of successive
translations of the weight $w$ by an aperiodic element $g$. Since we
will use the convex-transitive criterion, so it is stated \cite{Re}.\\
\textbf{\textit{Convex-transitive Criterion Theorem.}}
Let $X$ be a separable Banach space and $T$ be a bounded
linear operator on $X$. If there exist dense subsets $Y$ and $Z$ of
$X$ such that for every vectors $y, z $ in $Y$ and $Z$,
respectively, there exist a sequence $\{P_{k}\}_{k>1}\subseteq
\mathfrak{Cp}$
and functions $S_{k}: Z \rightarrow X$ such that\\
\textit{i}) $P_{k}(T)y\rightarrow 0$,\\
\textit{ii}) $S_{k}z \rightarrow0$ and $P_{k}(T)S_{k}z\rightarrow z,$\\
then $T$ is convex-transitive and consequently is convex-cyclic.

%%%%%%%%%%%%%%%%%%%%%%%%%%%%%%%%%%%%%%%%%%%%%%%%%%%%%%%%%%%%%%%%%%%%%%%%%%%%%
%\section{Main results}

%\subsection{Torsion Element}

%Like all hypercyclic operators, if $T$ is a convex-cyclic operator on arbitrary Banach space $X$, then $\Vert T\Vert>1$, \cite{Re},
%Hence it is clear that for any $g \in \mathfrak{G}$, the translation operator $T_{g}$ is not convex-cyclic,
%however, a weighted translation operator can be convex-cyclic, for example, all of the hypercyclic weighted translation operators
%which are characterized in \cite{Chen1} and \cite{Chen2} can be considered as the convex-cyclic weighted translation operators.
%But, the convex-cyclicity for a weighted translation operator is not equivalent to hypercyclicity of it.
%More precisely, contrary to the next theorem which its proof can be seen in \cite{Chen2},
%there exist many convex-cyclic weighted translation operators on $\mathfrak{L}^{p}(\mathfrak{G})$.\\

%\subsection{Aperiodic Element}

%%%%%%%%%%%%%%%%%%%%%%%%%%%%%%%%%%%%%%%%%%%%%%%%%%%%%%%%%%%%%%%%%%%%%%%%%%%

In the following theorem, we are going to state the sufficient
conditions for the weighted translation operator $T_{g,w}$ to be
convex-cyclic. This result not only extends \cite[Theorem 4.2]{Re} but also improves its proof.\\
In what follows, to prevent the text from prolongation, we use the following symbols.\\
\textit{i)} $\mathfrak{AP}(G)$ is instead of all aperiodic elements of $G$.\\
\textit{ii)} The set of all bounded positive functions
$w : G \rightarrow (0,+\infty)$ will be denoted by $\Psi(G)$.

\begin{theorem}\label{Th A}
Let $g \in \mathfrak{AP}(G)$ and $w \in\Psi(G)$. Then the weighted
translation operator $T_{g,w}:\mathfrak{L}^{p}(G)\rightarrow
\mathfrak{L}^{p}(G)$ is convex-cyclic whenever there exists a scalar
$\beta \geqslant 1$ such that
\begin{itemize}
\item [(i)] $\liminf \limits_{k \rightarrow\infty} \Vert \beta^{-k}
\prod\limits_{i=1}^{k} w*\delta_{g^{-i}} \Vert_{\infty}=0, \quad
\liminf \limits_{k \rightarrow\infty}
\Vert\beta^{k} \prod\limits_{i=0}^{k-1} w*\delta_{g^{i}})^{-1}
\Vert_{\infty}=0$\\ whenever $\beta >1$ and
\item [(ii)] $\liminf \limits_{k \rightarrow\infty} \Vert k^{-1}
\prod\limits_{i=1}^{k} w*\delta_{g^{-i}} \Vert_{\infty}=0, \quad
\liminf \limits_{k \rightarrow\infty}
\Vert k\prod\limits_{i=0}^{k-1} w*\delta_{g^{i}} )^{-1} \Vert_{\infty}
=0 $\\ whenever $\beta =1$.
\end{itemize}
\end{theorem}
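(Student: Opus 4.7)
The plan is to invoke the Convex-transitive Criterion Theorem recorded above, taking $Y=Z$ to be the dense subspace of compactly supported functions in $\mathfrak{L}^p(G)$. Given $y,z\in Y$ with supports in a compact set $K\subset G$, my task is to produce convex polynomials $P_k\in\mathfrak{Cp}$ and maps $S_k:Z\to\mathfrak{L}^p(G)$ satisfying the three conclusions of the criterion.

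Two structural features drive the construction. Since $g$ is aperiodic, the set $\{n\in\mathbb{Z}:Kg^n\cap K\ne\emptyset\}$ is finite, so after thinning I may assume the translates $\{Kg^{\pm n_k}\}$ are pairwise disjoint and disjoint from $K$. The liminf hypotheses then permit a further refinement: writing $\phi_n:=\prod_{i=1}^{n}w*\delta_{g^{-i}}$ and $\psi_n:=\prod_{i=0}^{n-1}w*\delta_{g^i}$, one may arrange
\[
\bigl\|\phi_{n_k}\vert_K\bigr\|_\infty\le\beta^{n_k}\epsilon_k\quad\text{and}\quad\bigl\|\psi_{n_k}^{-1}\vert_K\bigr\|_\infty\le\beta^{-n_k}\epsilon_k
\]
simultaneously, with $\epsilon_k\to 0$ (the factors $\beta^{\pm n_k}$ replaced by $n_k^{\pm 1}$ when $\beta=1$). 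A direct computation using right-invariance of $\vartheta$ and formula (2.1) then yields $\|T^{n_k}y\|_p\le\|\phi_{n_k}\vert_K\|_\infty\|y\|_p$, while the natural pullback
\[
(S_k z)(y):=\chi_{Kg^{-n_k}}(y)\cdot\frac{z(yg^{n_k})}{\phi_{n_k}(y)}
\]
satisfies $T^{n_k}(S_k z)=z$ by construction and $\|S_k z\|_p\le\|\psi_{n_k}^{-1}\vert_K\|_\infty\,\|z\|_p\to 0$.

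The main obstacle is selecting $P_k$ so that $P_k(T)y\to 0$ while $P_k(T)S_k z\to z$. The monomial $P_k=T^{n_k}$ is a convex polynomial and gives $P_k(T)S_k z=z$ exactly, but $\|T^{n_k}y\|_p$ is only controlled by $\beta^{n_k}\epsilon_k$ (respectively $n_k\epsilon_k$), which need not vanish. I plan to use instead a multi-term convex combination $P_k(T)=\sum_{j=1}^{L_k}c_{k,j}T^{m_{k,j}}$ with $m_{k,1}=n_k$ and the remaining $m_{k,j}$ drawn from the refined subsequence, spaced far enough that $\{Kg^{m_{k,j}}\}$ is pairwise disjoint. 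In $\mathfrak{L}^p(G)$ the disjoint-support identity $\|\sum_j c_j f_j\|_p^p=\sum_j c_j^p\|f_j\|_p^p$ collapses both limits to explicit sums; choosing coefficients $c_{k,j}\propto\beta^{-m_{k,j}}$ (respectively $1/m_{k,j}$ when $\beta=1$) exactly cancels the $\beta^{m_{k,j}}$-growth of $\|T^{m_{k,j}}y\|_p$, and simultaneously forces $c_{k,1}\to 1$ provided the remaining $m_{k,j}-n_k\to\infty$, so that $c_{k,1}T^{n_k}(S_k z)=c_{k,1}z\to z$.

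The delicate technical step, and expected main obstacle, is controlling the tail $\sum_{j\ge 2}c_{k,j}T^{m_{k,j}}(S_k z)$: its pieces are supported on $Kg^{m_{k,j}-n_k}$, and a calculation analogous to the one for $S_k z$ shows that the $\mathfrak{L}^p$-norm of each is bounded by $\|\phi_{m_{k,j}-n_k}\vert_K\|_\infty\|z\|_p$. For these pieces to vanish after multiplication by $c_{k,j}\propto\beta^{-m_{k,j}}$, the differences $m_{k,j}-n_k$ must also belong to the good subsequence governed by hypothesis (i); I plan to arrange this by a diagonal extraction, selecting $m_{k,j}$ so that both $m_{k,j}$ and $m_{k,j}-n_k$ satisfy the required bounds. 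The $\beta=1$ case is handled identically with the linear scale $n$ replacing $\beta^n$ throughout.
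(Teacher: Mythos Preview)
Your plan contains a genuine gap in the verification that $P_k(T)y\to 0$. You require $c_{k,1}\to 1$ so that the leading contribution $c_{k,1}T^{n_k}(S_kz)=c_{k,1}z$ converges to $z$; but then the corresponding piece of $P_k(T)y$, namely $c_{k,1}T^{n_k}y$, has $\mathfrak{L}^p$-norm essentially $\|\phi_{n_k}\vert_K\|_\infty\|y\|_p$. Hypothesis~(i) only says $\beta^{-n_k}\|\phi_{n_k}\|_\infty\to 0$ along a subsequence, i.e.\ $\|\phi_{n_k}\|_\infty=o(\beta^{n_k})$; it does \emph{not} force $\|\phi_{n_k}\|_\infty\to 0$, which would be the hypercyclicity condition for $T_{g,w}$ itself and would make the theorem vacuous for the non-hypercyclic examples it is meant to cover (in the paper's Example with $1<s<\beta<t$ one has $\|\phi_k\vert_K\|_\infty\sim s^k\to\infty$). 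The ``exact cancellation'' you describe, $c_{k,j}\beta^{m_{k,j}}=1/Z_k$, does not help either: with $c_{k,1}\to 1$ one has $Z_k\sim\beta^{-n_k}$, so $1/Z_k\sim\beta^{n_k}$ and the resulting bound on each term is $\epsilon_{k,j}\beta^{n_k}$, which need not be small. The case $\beta=1$ has the same defect with $n_k$ in place of $\beta^{n_k}$.

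The paper circumvents this tension by an algebraic device absent from your sketch: it does not take $Y=C_c(G)$ but rather $Y=(\beta I-T_{g,w})(\mathcal H)$, where $\mathcal H$ is the (dense) set of hypercyclic vectors for $\beta^{-1}T_{g,w}=T_{g,\beta^{-1}w}$, the latter operator being hypercyclic by hypothesis~(i) via the Chen--Chu characterization. With the geometric convex polynomial $P_k(t)=\dfrac{\beta-1}{\beta^k-1}\sum_{j=0}^{k-1}\beta^{k-1-j}t^j$ one has the telescoping identity
\[
P_k(T_{g,w})(\beta I-T_{g,w})f_0=\frac{\beta-1}{\beta^k-1}\bigl(\beta^k f_0-T_{g,w}^k f_0\bigr)=\frac{\beta^k(\beta-1)}{\beta^k-1}\bigl(f_0-T_{g,\beta^{-1}w}^k f_0\bigr),
\]
so condition~(i) of the criterion reduces to the recurrence $T_{g,\beta^{-1}w}^{k}f_0\to f_0$ along a subsequence, which holds precisely because $f_0\in\mathcal H$. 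The same $P_k$, paired with $S_k=\dfrac{\beta^k-1}{1-\beta}(\beta I-T_{g,w})S_{g,w}^k$ where $S_{g,w}f=(w^{-1}f)*\delta_{g^{-1}}$, handles the remaining conditions directly from the second limit in hypothesis~(i). For $\beta=1$ the analogue uses $P_k(t)=k^{-1}(1+t+\cdots+t^{k-1})$ and $Y=(I-T_{g,w})(C_c(G))$. The key point you are missing is that passing through the factor $(\beta I-T_{g,w})$ converts the intractable sum $\sum c_{k,j}T^{m_{k,j}}y$ into a two-term expression $\beta^k f_0-T^k f_0$ whose size is governed exactly by the hypotheses.
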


\begin{proof}
The condition \textit{(i)} implies that $T_{g,\beta^{-1} w}=\beta^{-1}T_{g,w}$
is hypercyclic, \cite[Theorem 2.3]{Chen2}. Hence the set $\mathcal{H}$ consisting of all
hypercyclic vectors for $T_{g, \beta^{-1}w}$ are dense in $\mathfrak{L}^{p}(G)$.
Also, the condition \textit{(i)} implies that $ker(\beta I-T_{g,w})=\{0\}$ and
hence $(\beta I-T_{g,w})$ has dense range.

Now we want to show that $T_{g, w}$ satisfies the convex-cyclic criterion.
For this goal, we put $Y=(\beta I-T_{g,w})(\mathcal{H})$ and
$Z=C_c(G)$, the set of all continuous
$f \in \mathfrak{L}^{p}(G)$ with compact support. Thus $Y$ and $Z$ are dense subsets of $\mathfrak{L}^{p}(G)$.

Now we consider a convex polynomial
$$P_{k}(t)=\dfrac{\beta-1}{\beta^{k}-1} (\beta^{k-1}+\beta^{k-2}t+\cdots + t^{k-1}), \;\;\; k\in \mathbb{N},$$
and if we define
$$S_{g,w}(f)=(\dfrac{1}{w}f)*\delta_{g^{-1}}, \;\;\; f \in \mathfrak{L}^{p}(G), $$
then set
\begin{equation} \label{eq.}
S_{k}:= \dfrac{\beta^{k}-1}{1-\beta}(\beta I-T_{g, w})S^{k}_{g,w}.
\end{equation}
For an arbitrary $f_{0} \in \mathcal{H}$, we have $(\beta I-T_{g, w})f_{0} \in Y$ 
and hence, 
\begin{align*} &P_{k}(T_{g, w})(\beta I-T_{g,
w})(f_{0})= \dfrac{\beta-1}{\beta^{k}-1} \big(\beta^{k}f_{0}-
T^{k}_{g,w}(f_{0})\big)\\&=\dfrac{\beta-1}{\beta^{k}-1} \big(\beta^{k}f_{0}-
 (\prod_{i=0}^{k-1}w*\delta_{g^{i}})
f_{0}*\delta_{g^{k}}\big)\\&= \dfrac{\beta^{k}(\beta-1)}{\beta^{k}-1}
\big(f_{0}- (\beta^{-k}
\prod_{i=0}^{k-1}w*\delta_{g^{i}})f_{0}*\delta_{g^{k}}\big)\\&=
\dfrac{\beta^{k}(\beta-1)}{\beta^{k}-1} \big(f_{0}-T^{k}_{g,
\beta^{-1}w}(f_{0})\big).
\end{align*}
Now, above equality implies that
$$\Vert P_{k}(T_{g, w})(\beta I-T_{g, w})(f_{0}) \Vert_{p}=
\Vert \dfrac{\beta^{k}(\beta-1)}{\beta^{k}-1} \big( f_{0}-T^{k}_{g,
\beta^{-1}w}(f_{0}) \big) \Vert_{p} \rightarrow 0 $$ 
as $k\rightarrow \infty$. On the other side, the definition of $S_{g,w}$
implies that
$$ S^{k}_{g,w}(h)=\big(\prod_{i=1}^{k}w*\delta_{g^{-i}}\big)^{-1} (h*\delta_{g^{-k}})
, \;\;\; h \in C_c(G).$$ Now, from \eqref{eq.} we get that
$$ \Vert S_{k}(h) \Vert_{p}\leqslant \dfrac{1}{\beta -1} ( \beta +\Vert T_{g,w }\Vert)
\Vert \beta^{k} ( \prod_{i=0}^{k-1}w*\delta_{g^{i}})^{-1}
\Vert_{\infty} \Vert h \Vert_{p},$$ so $ \Vert S_{k}(h) \Vert_{p}
\rightarrow 0 $ as $k \rightarrow \infty$. Moreover, for every $h\in C_c(G)$ we have
$$\Vert P_{k}(T_{g,w})S_{k}(h)-h \Vert_{p}=\Vert \beta^{k} S^{k}_{g,w}(h) \Vert_{p} \leqslant
\Vert \beta^{k} ( \prod_{i=0}^{k-1}w*\delta_{g^{i}})^{-1}
\Vert_{\infty} \Vert h \Vert_{p}.$$ Hence, the condition
\textit{(i)} and the above inequality implies
$P_{k}(T_{g,w})S_{k}(h) $ tends to $h$ in $\Vert \cdot \Vert_{p}-$norm,
whenever $k\rightarrow \infty$.
Therefore $T_{g,w}$ satisfies the convex-transitive criterion and consequently it is convex-cyclic.\\
When the condition \textit{(ii)} holds, then we may consider the convex polynomial
$$P_{k}(t)=k^{-1} (1+t+t^{2}+\cdots + t^{k-1}),$$
and 
$$
S_k:=k(I-T_{g,w})S^{k}_{g,w}
$$
for any $k\in \mathbb{N}$.
The condition \textit{(ii)}
implies that $ker(I-T_{g,w})=\{0\}$ and hence the operator
$(I-T_{g,w})$ has dense range. Indeed, if there exists a non-zero $h\in
\mathfrak{L}^{p}(G)$ such that $T_{g,w}(h)=h$, then since
$T_{g,w}$ is invertible and $T^{-1}_{g,w}= T_{g^{-1},(\frac{1}{w}*\delta_{g^{-1}})}$,
we obtain
$$ h=T^{k}_{g^{-1},(\frac{1}{w}*\delta_{g^{-1}})}(h)=
\big(\prod_{i=1}^{k}w*\delta_{g^{-i}}\big)^{-1} (h*\delta_{g^{-k}}), \;\;\; k
\in \mathbb{N}.$$ Now, the inequality
\begin{align*}
\Vert h \Vert^{p}_{p}&= \int_{G} \vert (
\prod_{i=1}^{k}w*\delta_{g^{-i}})^{-1}(x) \vert^{p} \vert (
h*\delta_{g^{-k}})(x) \vert^{p} d\vartheta(x)
\\&= \int_{G} \vert (
\prod_{i=0}^{k-1}w*\delta_{g^{i}})^{-1}(x) \vert^{p} \vert h(x)
\vert^{p} d\vartheta(x)\\& \leqslant \Vert k (
\prod_{i=0}^{k-1}w*\delta_{g^{i}})^{-1} \Vert^{p}_{\infty} \Vert h
\Vert^{p}_{p}
\end{align*}
implies that $ker(I-T_{g,w})=\{0\}$. \\
Now set $Y=Z=C_c(G)$. Then for each $f\in C_c(G)$ we have
\begin{align*}
&\Vert P_{k}(T_{g, w})(I-T_{g, w})(f) \Vert_{p}=
\Vert \frac{1}{k}f-\frac{1}{k}(\prod_{i=0}^{k-1}w*\delta_g^{i})f*\delta_{g^k} \Vert_{p} \\
&=\Vert\frac{1}{k}f*\delta_g^{-k}-\frac{1}{k}(\prod_{i=1}^{k}w*\delta_{g^{-i}})f\Vert_{p}\\
&\leqslant\frac{1}{k}\Vert f*\delta_{g^{-k}}\Vert_{p}+
\Vert\frac{1}{k}\prod_{i=1}^{k}w*\delta_{g^{-i}}\Vert_{\infty}\Vert f \Vert_{p}\rightarrow 0 
\end{align*}
as $k\to\infty$. Similarly
$$ \Vert S_k(f) \Vert_{p}\leqslant (1+\Vert T_{g,w} \Vert)
\Vert k(\prod_{i=0}^{k-1}w*\delta_{g^i})^{-1}) \Vert_{\infty}\Vert f \Vert_{p}$$
and
$$ \Vert P_{k}(T_{g,w})S_{k}(f)-f \Vert_{p}=\Vert S^{k}_{g,w}(f) \Vert_{p} \leqslant
\Vert k(\prod_{i=0}^{k-1}w*\delta_{g^{i}})^{-1}\Vert_{\infty} \Vert f \Vert_{p}, $$
that yields $S_k(f)$ and $P_k(T_{g,w})S_k(f)$ approaches to $0$ and $f$, respectively, as $k\to\infty$.
Therefore, $T_{g,w}$ is again convex-cyclic in this case.
\end{proof}

We would like to state the following lemma because we will use it in the proof of the next Theorem.
\begin{lemma}\label{lem1}
Let $x_0\in X$ be a convex-cyclic vector for an operator $T$ on $X$,
$ \varepsilon >0 $ and $N_0 \in \mathbb{N}$. Then there is a
convex polynomial $P_{k}(t):= a_{0}+ a_{1}t+\cdots + a_{k}t^{k}$,
$a_{k}>0$ such that $k>N_0$ and $ \Vert P_{k}(T)(x_0)-x  \Vert < \varepsilon $ 
for every $x\in X$.
\end{lemma}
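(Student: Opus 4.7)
The plan is to use the defining density property of a convex-cyclic vector to get a first-pass approximation, and then perturb it to force both the high-degree and positive-leading-coefficient conditions. The reading of the statement I will adopt is the natural one: for each $x \in X$, $\varepsilon > 0$, and $N_0 \in \mathbb{N}$, there exists a convex polynomial $P_k$ (depending on $x$, $\varepsilon$, $N_0$) of degree $k > N_0$, with $a_k > 0$, such that $\|P_k(T)x_0 - x\| < \varepsilon$.

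Step one is to apply convex-cyclicity of $x_0$ directly: since $\overline{co(Orb(T,x_0))} = X$ and $co(Orb(T,x_0)) = \{Q(T)x_0 : Q \in \mathfrak{Cp}\}$, I can pick $Q \in \mathfrak{Cp}$ with $\|Q(T)x_0 - x\| < \varepsilon/2$. A priori nothing tells me that $\deg Q > N_0$ or that its leading coefficient is strictly positive in the position I want, so step two is to graft onto $Q$ a monomial that raises the degree. I fix any integer $N > N_0$ (e.g.\ $N = N_0+1$) and let $M := \|T^N x_0 - x\|$. Then I choose $\delta \in (0,1)$ small enough that $\delta M < \varepsilon/2$, and define
\begin{equation*}
P_k(t) := (1-\delta)\,Q(t) + \delta\, t^{N}.
\end{equation*}

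Because $Q$ has nonnegative coefficients summing to $1$ and $(1-\delta) + \delta = 1$, the polynomial $P_k$ lies in $\mathfrak{Cp}$. Its degree is $k = \max(\deg Q, N) \geq N > N_0$; and the coefficient $a_k$ at position $k$ is strictly positive, because either $\deg Q > N$, in which case $a_k = (1-\delta)$ times the leading coefficient of $Q$, or $\deg Q \leq N$, in which case $k = N$ and $a_k \geq \delta > 0$. Finally, the triangle inequality gives
\begin{equation*}
\|P_k(T)x_0 - x\| \;\leq\; (1-\delta)\,\|Q(T)x_0 - x\| + \delta\,\|T^{N}x_0 - x\| \;<\; (1-\delta)\tfrac{\varepsilon}{2} + \tfrac{\varepsilon}{2} \;\leq\; \varepsilon,
\end{equation*}
which closes the argument.

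There is no real obstacle here; the statement is essentially a cosmetic refinement of the density of $co(Orb(T,x_0))$, and the only delicate point is the bookkeeping that simultaneously ensures the degree condition $k > N_0$ and the positivity $a_k > 0$ after the convex perturbation. The device of blending $Q(t)$ with a single high-degree monomial $t^N$ handles both at once.
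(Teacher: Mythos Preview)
Your proof is correct. Both your argument and the paper's hinge on the same idea---take a convex-polynomial approximation of the target and form a convex combination with a high-degree term to force $k>N_0$---but the implementations differ. The paper first approximates $0$ by $P_n(T)x_0$, shifts by $t^{N_0}$ to obtain $T^{N_0}P_n(T)x_0\approx 0$ with all terms of degree $\ge N_0$, separately approximates $2x$ by $Q_m(T)x_0$, and then averages: $\tfrac{1}{2}\bigl(t^{N_0}P_n(t)+Q_m(t)\bigr)$. Your route is more economical: you approximate $x$ once by $Q(T)x_0$ and blend directly with a single monomial $\delta t^{N}$, letting $\delta\to 0$ absorb the error from the monomial term. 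Your version avoids the auxiliary approximation of $0$ and the factor-of-$2$ bookkeeping, at no loss of generality; the paper's version makes a bit more explicit that all low-degree coefficients can be pushed toward zero if desired, which is closer to how the lemma is actually used downstream (to justify starting $P_n$ at order $N_0$).
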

\begin{proof}
Since $x_0\in X$ is a convex-cyclic vector for $T$, so there is a convex
polynomial $P_{n}(t):= \sum\limits_{i=0}^{n}a_{i}t^{i}$, $a_{n}>0$
such that $ \Vert P_{n}(T)(x_0) \Vert < \dfrac{\varepsilon}{ \Vert T \Vert^{N_0} } $ and consequently
\begin{align} \label{A}
\Vert T^{N_0}\big( a_{0}x_0+ a_{1}T(x_0)+\cdots + a_{n}T^{n}(x_0) \big)  \Vert <\varepsilon.
\end{align}
Now for an arbitrary vector $x\in X$, there is a convex
polynomial
 $Q_{m}(t):= \sum\limits_{i=0}^{m}b_{i}t^{i} $, $b_{m}>0$
 such that $ \Vert Q_{m}(T)(x_0) -2x \Vert < \varepsilon. $
 Thus
 \begin{align} \label{B}
\Vert \dfrac{1}{2}Q_{m}(T)(x_0) -x \Vert < \dfrac{\varepsilon}{2}.
    \end{align}
From \eqref{A} and \eqref{B} we get that
$$ \Vert\dfrac{1}{2} \big( T^{N_0}P_{n}(T)(x_0)+Q_{m}(T)(x_0) \big)-x \Vert < \varepsilon .  $$
Note that the polynomial $ \dfrac{1}{2} \big(T^{N_0} P_{n}(T)x+Q_{m}(T)x
\big)$ is a convex polynomial such that its degree is equal to $l:=
\max\{ n+N_0 , m  \} $. Therefore $l >N_{0}$ and the proof is completed.
\end{proof}

\begin{theorem} \label{Th B}
Let $g \in \mathfrak{AP}(G)$, $ w \in\Psi(G)$ and
also, let the weighted translation operator
$T_{g,w}$ on $\mathfrak{L}^{p}(G)$ be convex-cyclic. If
$\sigma_{p}(T_{g,w}^{*})=\emptyset$, then for each compact subset
$K\subset G$ with positive measure, there exist $N_0\in
\mathbb{N}$, a sequence of Borel subsets $(E_n)\subseteq K$ and  a
 sequence $(a_n)\subseteq [0,1]$ with $\sum_{i=0}^{\infty}a_i=1$ such that
\begin{align*}
\small 1/ \liminf_{n\rightarrow\infty}
\Vert(a_{0}\prod_{i=0}^{N_0-1}w*\delta_{g^{i}}
+\cdots +
a_{n}\prod\limits_{i=0}^{N_0+n-1} w*\delta_{g^{i}}
)|_{E_n}\Vert_{\infty}=0.
\end{align*}
 \end{theorem}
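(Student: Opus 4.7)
My plan is to start with a convex-cyclic vector $f_0\in\mathfrak{L}^{p}(G)$ for $T_{g,w}$ and extract the required sequences from a sequence of convex-polynomial approximations to $\chi_K$. First I would use aperiodicity: since $g\in\mathfrak{AP}(G)$ and $K$ is compact, there exists $N_0\in\mathbb{N}$ with $K\cap Kg^n=\emptyset$ for every $n\geq N_0$ (the standard lemma on aperiodic elements used in \cite{Chen2}). This $N_0$ will be the one in the conclusion, and the geometric point is that for $x\in K$ and $j\geq N_0$ one has $xg^{-j}\notin K$, so in the formula
$$
T^{j}_{g,w}(f_0)(x)=\Bigl(\prod_{i=0}^{j-1}w*\delta_{g^i}\Bigr)(x)\,f_0(xg^{-j}),
$$
the dependence on $f_0$ is only through its values outside $K$.

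Next, by convex-cyclicity combined with Lemma~\ref{lem1}, for each $n\in\mathbb{N}$ I would produce a convex polynomial $P^{(n)}(t)=\sum_{j=0}^{k_n}b_{n,j}t^j$ of degree $k_n>N_0+n$ with $\|P^{(n)}(T_{g,w})f_0-\chi_K\|_p<1/n$. I would split $P^{(n)}(T_{g,w})f_0=H_n+S_n$ into the head $H_n=\sum_{j<N_0}b_{n,j}T^{j}_{g,w}f_0$, which lies in the compact convex hull of the finitely many vectors $\{T^{j}_{g,w}f_0\}_{j<N_0}$, and the tail $S_n=\sum_{j\geq N_0}b_{n,j}T^{j}_{g,w}f_0$. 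Passing to a subsequence along which $H_n$ converges in $L^p$ forces $S_n$ to approximate some function in $L^p$ differing from $\chi_K$ by a bounded-degree piece. Renormalising the tail coefficients into a probability sequence on $\{N_0,N_0+1,\dots\}$ and extracting a diagonal/weak-$*$ limit $(a_j)_{j\geq 0}$ would yield the $(a_n)$ of the statement, and the Borel sets $E_n\subseteq K$ would be chosen as level sets of the partial sums $\sum_{j=0}^{n}a_j\prod_{i=0}^{N_0+j-1}w*\delta_{g^i}$.

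The main obstacle is twofold: one must ensure that the coefficient mass in $(b_{n,j})_{j\geq N_0}$ does not escape to infinity when extracting $(a_j)$, and one must translate the $L^p$-approximation of $\chi_K$ into actual growth of the essential suprema on positive-measure subsets of $K$. I would dispatch both at once by a contradiction argument using Hahn--Banach: if no admissible triple $(N_0,(a_n),(E_n))$ forced $\liminf\|\cdot|_{E_n}\|_\infty=+\infty$, a standard duality computation---where the hypothesis $\sigma_p(T^*_{g,w})=\emptyset$ is exactly what prevents a pathological eigenfunctional from surviving---would produce a nonzero $\Lambda\in\mathfrak{L}^{p}(G)^*$ with $\sup_n\mathrm{Re}\,\Lambda(T^{n}_{g,w}f_0)<+\infty$, contradicting the Hahn--Banach characterisation of convex-cyclicity of $f_0$. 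Lemma~\ref{lem1} is the indispensable input here, since it supplies convex polynomial approximants of arbitrarily high degree, so the diagonal extraction and limit can always be performed past any prescribed $N_0$.
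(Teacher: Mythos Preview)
Your outline misses the decisive idea and, as written, does not close. The paper uses the hypothesis $\sigma_p(T_{g,w}^{*})=\emptyset$ not in a Hahn--Banach contradiction but in order to invoke density of convex-cyclic vectors: one chooses a convex-cyclic vector $h$ with $\|h-\chi_K\|_p<\theta^2$, so that $|h|<\theta$ off a set $B_\theta\subseteq G\setminus K$ of measure $<\theta^p$. Since $xg^{-j}\notin K$ for $x\in K$ and $j\geq N_0$, this gives the pointwise bound $|h*\delta_{g^{j}}(x)|<\theta$ on $K\setminus B_\theta g^{j}$. Combining that smallness with the approximation $P_n(T_{g,w})h\approx \chi_K$ on $K$ (one also uses \cite[Corollary~2.2]{Feldman} to arrange that $P_n$ begins at degree $N_0$, so no ``head'' term is needed) forces the weight sum $\sum_j a_j\prod_{i=0}^{N_0+j-1}w*\delta_{g^i}$ to exceed $(1-\theta)/\theta$ on the set $E_n:=K\setminus\bigl(\bigcup_{j=N_0}^{N_0+n}B_\theta g^{j}\cup C_\theta\bigr)$. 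This direct mechanism---small $h$ off $K$ plus large $P_n(T_{g,w})h$ on $K$---is the whole proof.

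Your plan starts instead from an \emph{arbitrary} convex-cyclic vector $f_0$, so you have no control whatsoever on $f_0(xg^{-j})$, and the approximation $\|P^{(n)}(T_{g,w})f_0-\chi_K\|_p\to 0$ alone cannot force the weight products to blow up on any positive-measure subset of $K$. The head/tail splitting and the diagonal/weak-$*$ extraction of $(a_j)$ are vague (why would the renormalised tail coefficients converge to a probability sequence rather than lose mass?), and the proposed contradiction step is not a proof: you assert that failure of the conclusion would yield a nonzero $\Lambda$ with $\sup_n\mathrm{Re}\,\Lambda(T_{g,w}^{n}f_0)<\infty$, but no mechanism is given to manufacture such a functional from a bound on $L^\infty$-norms over subsets of $K$, and $\sigma_p(T_{g,w}^{*})=\emptyset$ plays no identifiable role there. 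In short, the argument needs the convex-cyclic vector to be $L^p$-close to $\chi_K$; once you make that choice the proof becomes a straightforward measure-theoretic estimate, and the compactness/duality machinery you propose is both unnecessary and insufficient.
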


\begin{proof}
Let $\varepsilon>0$ and $K\subset G$ be an arbitrary compact set
with $\vartheta(K)>0$. Then there exists a positive integer $N_{0}$
such that for every $n>N_{0}$,  $K\cap Kg^{n}=\emptyset$, because
$g$ is aperiodic \cite[Lemma 2.1]{Chen2}. Note that, if
$\sigma_{p}(T_{g,w}^{*})=\emptyset$, then the set of all
convex-cyclic vectors for $T_{g,w}$ is dense in
$\mathfrak{L}^{p}(G)$ \cite[Theorem 3.5]{Re}. Consider $\chi_{K}\in
\mathfrak{L}^{p}(G)$ as the characteristic function of $K$ and
choose $\theta \in (0,\; \dfrac{\varepsilon}{1+\varepsilon})$, then
there is a convex-cyclic vector $h\in \mathfrak{L}^{p}(G)$ such
that $\Vert h- \chi_{K}  \Vert_{p}<\theta^{2}$. Also there exists a
convex polynomial $P_{n}\in \mathfrak{Cp}$ with the real
coefficients of the finitely supported sequence $(a_n)\subseteq
[0,1]$  such that $a_0+a_1+ \cdot\cdot\cdot +a_n=1$ and 
$$\Vert P_{n}(T_{g,w})(h)- \chi_{K} \Vert_{p}<\theta^{2}.$$

 To attain the
desired  result, by Lemma \ref{lem1}, the power of the polynomial $P_n$ is somehow chosen
greater than $N_0$ and sufficiently large as well.
Furthermore, by \cite[Corollary 2.2]{Feldman}, it does not make an ambiguity if
we start the first term of $P_n$ of order $N_0$ i.e., zero is the
root of the order $N_0$. Indeed,
$P_n(t)=a_0t^{N_0}+a_1t^{N_0+1}+\cdots +a_nt^{N_0+n}$.  Now,
consider the following Borel subsets of $G$.
\begin{align*}
B_{\theta}&=\{x\in G\setminus K:\;\; |h(x)| \geqslant \theta \},\\
C_{\theta}&=\{x\in K:\;\; \vert
\big(a_{0}(\prod\limits_{i=0}^{N_0-1}w*\delta_{g^{i}})
h*\delta_{g^{N_0}}+a_{1}(\prod\limits_{i=0}^{N_0} w*\delta_{g^{i}})
h*\delta_{g^{N_0+1}}\\ &+\cdots + a_{n}(\prod\limits_{i=0}^{N_0+n-1}
w*\delta_{g^{i}}) h*\delta_{g^{N_0+n}}\big)(x)-1\vert \geqslant \theta
\}.
\end{align*}
Note that
\begin{align*}
\theta^{2p}&>\Vert h - \chi_{K} \Vert_{p}^p>\int_{G\setminus K} |h-
\chi_{K}|^p
d\vartheta\\
&\geq \int_{B_{\theta}} |h|^p d\vartheta\geq \theta^p
\vartheta(B_{\theta}).
\end{align*}
Moreover observe that,
\begin{align*}
\theta^{2p} & > \Vert  P_{n}(T_{g,w})(h)- \chi_{K}\Vert
_{p}^{p}\\
& =\int_{G} \vert
a_{0}(\prod\limits_{i=0}^{N_0-1}w*\delta_{g^{i}})
h*\delta_{g^{N_0}}+a_{1}(\prod\limits_{i=0}^{N_0} w*\delta_{g^{i}})
h*\delta_{g^{N_0+1}}\\ &+\cdots + a_{n}(\prod\limits_{i=0}^{N_0+n-1}
w*\delta_{g^{i}}) h*\delta_{g^{N_0+n}} -\chi_{K}\vert^{p}d\vartheta\\
%&=\int_{\mathfrak{G}} \vert
%a_{0}h*\delta_{g^{-n}}+a_{1}w*\delta_{g^{-n}} h*\delta_{g^{-(n-1)}}+\cdots\\
%&+a_{n}(\prod\limits_{i=1}^{n}w*\delta_{g^{-i}})h
%-\chi_{K}*\delta_{g^{-n}}\vert^{p}d\vartheta\\
&\geqslant \int_{K} \vert
a_{0}(\prod\limits_{i=0}^{N_0-1}w*\delta_{g^{i}})
h*\delta_{g^{N_0}}+a_{1}(\prod\limits_{i=0}^{N_0} w*\delta_{g^{i}})
h*\delta_{g^{N_0+1}}\\ &+\cdots + a_{n}(\prod\limits_{i=0}^{N_0+n-1}
w*\delta_{g^{i}}) h*\delta_{g^{N_0+n}}-1\vert^{p}d\vartheta\\
&\geqslant \theta^{p} \vartheta(C_\theta),
\end{align*}
which implies that $\vartheta(C_\theta) \leqslant \theta^{p}$.
Furthermore, on the subset $K\setminus C_{\theta}$ we have
\begin{align*}
& 1-\theta < \vert a_{0}(\prod\limits_{i=0}^{N_0-1}w*\delta_{g^{i}})
h*\delta_{g^{N_0}}+a_{1}(\prod\limits_{i=0}^{N_0} w*\delta_{g^{i}})
h*\delta_{g^{N_0+1}}\\ &+\cdots + a_{n}(\prod\limits_{i=0}^{N_0+n-1}
w*\delta_{g^{i}}) h*\delta_{g^{N_0+n}}\vert\\
& \leqslant  \big(a_{0}\prod\limits_{i=0}^{N_0-1}w*\delta_{g^{i}}
+a_{1}\prod\limits_{i=0}^{N_0} w*\delta_{g^{i}} +\cdots\\ & +
a_{n}\prod\limits_{i=0}^{N_0+n-1} w*\delta_{g^{i}}\big)\max
\limits_{N_0\leqslant i \leqslant N_0+n-1} |h*\delta_{g^{i}}|.
\end{align*}
On the other hand, according the fact that  $K\cap Kg^{n}=\emptyset$, then on the subset
$E_n:=K\setminus
 (B_{\theta}g^{N_0} \cup B_{\theta}g^{N_0+1} \cup \cdots \cup B_{\theta}g^{N_0+n}
  \cup C_{\theta})$ we obtain
\begin{align*}
&1/ (a_{0}\prod\limits_{i=0}^{N_0-1}w*\delta_{g^{i}}
+a_{1}\prod\limits_{i=0}^{N_0} w*\delta_{g^{i}} +\cdots +
a_{n}\prod\limits_{i=0}^{N_0+n-1}
w*\delta_{g^{i}}) \\
&< \frac{1}{1-\theta}\max\limits_{N_0 \leqslant i \leqslant
N_0+n}\vert h*\delta_{g^{i}}\vert\\&<
\dfrac{\theta}{1-\theta}\\&<\varepsilon,
\end{align*}
which completes the proof.
\end{proof}
\begin{remark}
If one sets $w=1$ in the statement of Theorem \ref{Th B}, it is
pointed out that the translation operator $T_{g}$ can not be
convex-cyclic itself.
\end{remark}
%In the following example, we want to show that how the second case
%in Theorem \ref{Th A} works correctly.
\begin{example}
Let $G=\mathbb{R}$, be the group of the real numbers
equipped with
the Lebesgue measure. Fix a non-zero negative $g\in \mathbb{R}$ and consider
the convolution  $(f*\delta_{g})(x)=f(x-g)$, $x\in \mathbb{R}$ and $f\in
\mathfrak{L}^p(\mathbb{R})$.  Choose  arbitrary real numbers $s,t$
as $1<s<t$, then define the weight function $w$ on $\mathbb{R}$ by
$$w(x)=\left\{
  \begin{array}{ll}
    t, & {1\leq x}, \\
        -\frac{x}{2}+1, & {-1< x < 1},\\
    s, & {x\leq -1}.
  \end{array}
\right.$$  By taking any $\beta\in \mathbb{R}$ with $s<\beta <t$,
Theorem \ref{Th A} ensures that $T_{g,w}$ is  convex-cyclic on
$\mathfrak{L}^p(\mathbb{R})$  while is not hypercyclic by
\cite[Theorem 2.2]{Chen2}.
 \end{example}

\begin{example}
Observe that by the second case
in Theorem \ref{Th A}, the weighted translation operator
$T:=T_{g,(w*\delta_{g})} $ on $\mathfrak{L}^{p}(\mathbb{Z})$ is a
convex-cyclic operator whenever $g=-1$ and
$$w=\lbrace w_{i} \rbrace= \begin{cases}
2,  &  i\geqslant 1 \\
1,  &  i \leqslant 0.
\end{cases} $$
We show that the point spectrum of the
adjoint of $T$ is empty. Indeed, if $\lambda \in \sigma_{p}(T^{*})$,
then there is a non-zero $f \in \mathfrak{L}^{q}(\mathbb{Z}) $ such
that $T^{*}f= \lambda f$. Since $(T^{*})^{n}f= \lambda^{n} f$ for
every $n \in \mathbb{N}$, so $ \prod
\limits_{i=0}^{n-1}w(m-i)f(m-n)=\lambda^{n}f(m) $ for every $m \in
\mathbb{Z}$. If $m=0$, then $f(-n)=\lambda^{n}f(0)$. This implies
that $ \vert \lambda \vert < 1$ because $|f(-n)|\rightarrow 0$ as
$n\rightarrow \infty$. But if $m >0$, then $2^{m}f(0)=\lambda^{m}
f(m)$. Thus $2< \vert \lambda \vert$ because $|f(m)|\rightarrow 0$
as $n \rightarrow \infty$. This contradiction shows that
$\sigma_{p}(T^{*})=\phi$.\\
Now we consider $ P_{n}(x)= a_{0}x^{N_{0}}+a_{1}x^{N_{0}+1}+\cdots
+a_{n}x^{N_{0}+n} $ as a convex polynomial when $N_{0}\in
\mathbb{N}$ such that $N_{0}=\vartheta (K_{N_{0}})$ for an arbitrary
finite subset $K_{N_{0}}=\{ m+1, \; m+2,\; \cdots, \; m+N_{0} \}$ of
$\mathbb{Z}$. It is not difficult to see that $K_{N_{0}} \bigcap K
g^{\pm N_{0}} = \phi .$ For simplicity assume that all elements in $K_{N_0}$ are negative. 
If $0 < \theta < 1 $, then there exist an $h
\in \mathfrak{L}^{p}(\mathbb{Z})$ and a convex polynomial $P_{n}(x)$
similar above such that
$$ \big\Vert h-\chi_{K_{n_{0}}} \big\Vert_{p}\leqslant \theta^{2} \;\; \mbox{ and} \;\;\;\;
\big\Vert P_{n}(T)( h)-\chi_{K_{n_{0}}} \big\Vert_{p}\leqslant
\theta^{2}. $$ Let
\begin{align*}
A&= \{ x \in K_{N_{0}}: \vert h(x)-1 \vert \geqslant \theta \},\\
B&=\{ x \in \mathbb{Z} \setminus K_{N_{0}}: \vert h(x) \vert
\geqslant \theta \},\\
 C&= \{ x \in K_{N_{0}}: \big\vert a_{0}
\prod_{i=1}^{N_{0}}w(x+i) h(x+N_{0}) + \\&a_{1}
\prod_{i=1}^{N_{0}+1}w(x+i) h(x+N_{0}+1)+ \cdots +\\& a_{n} \prod
\limits _{i=1}^{N_{0}+n}w(x+i) h(x+N_{0}+n) -1
\big\vert \geqslant \theta \}\\
&\mbox{and} \\
D&= \{ x \in K_{N_{0}}: \big\vert a_{0}
\prod\limits_{i=1}^{N_{0}}w(x+i-n) h(x+N_{0}-n)+\\& a_{1} \prod
\limits_{i=1}^{N_{0}+1}w(x+i-n) h(x+N_{0}+1) + \cdots +\\& a_{n}
\prod \limits _{i=1}^{N_{0}+n}w(x+i-n) h(x+N_{0})
 \big\vert \geqslant \theta \}.
 \end{align*}
Then\; $\vartheta (A)<\theta^{p}$, $\vartheta (B)<\theta^{p}$,
$\vartheta (C)<\theta^{p}$ and $\vartheta (D)<\theta^{p}$ imply that
$\vartheta (A)=\vartheta (B)=\vartheta
 (C)=\vartheta (D)=0$.
Now, for an $x \in K_{N_{0}}$ we have that
$$a_{0} \prod_{i=1}^{N_{0}}w(x+i)+ a_{1} \prod_{i=1}^{N_{0}+1}w(x+i) +\cdots +
a_{n} \prod \limits _{i=1}^{N_{0}+n}w(x+i) \rightarrow \infty$$ as
$n \rightarrow \infty$, but
$$a_{0} \prod_{i=1}^{N_{0}}w(x+i-n)+ a_{1} \prod_{i=1}^{N_{0}+1}w(x+i-n) +\cdots +
a_{n} \prod \limits _{i=1}^{N_{0}+n}w(x+i-n)$$ $\rightarrow
\sum\limits_{j=0}^{n}a_{j}=1 $ as $n \rightarrow \infty$.
\end{example}

%%%%%%%%%%%%%%%%%%%%%%%%%%%%%%%%%%%%%%%%%%%
% References
%%%%%%%%%%%%%%%%%%%%%%%%%%%%%%%%%%%%%%%%%%%
\bibliographystyle{amsplain}
%%%%%%%%%%%%%%%%%%%%%%%%%%%%%%%%%%%%%%%%%%%
% Please cite your relevant papers but at most total 5 papers/books.
%%%%%%%%%%%%%%%%%%%%%%%%%%%%%%%%%%%%%%%%%%%

\end{document}